\documentclass[11pt]{article}
\usepackage{graphicx}
\usepackage{amscd, amssymb}
\usepackage{enumerate}
\usepackage{hyperref}
\usepackage{lscape}
\textwidth = 6.5 in
\textheight = 9 in
\oddsidemargin = 0.0 in
\evensidemargin = 0.0 in
\topmargin = 0.0 in
\headheight = 0.0 in
\headsep = 0.0 in
\parindent = 0.0in

\newenvironment{eq}{\begin{equation}}{\end{equation}}
\newenvironment{proof}{{\bf Proof}:}{\vskip 5mm }

\newtheorem{proposition}{Proposition}[subsection]
\newtheorem{lemma}[proposition]{Lemma}
\newtheorem{definition}[proposition]{Definition}

\newtheorem{remark}[proposition]{Remark}

\newcommand{\llabel}[1]{\label{#1}}
\newcommand{\comment}[1]{}
\newcommand{\sr}{\rightarrow}

\newcommand{\nn}{{\bf N\rm}}

\newcommand{\wt}{\widetilde}

{\vskip
3mm}

\begin{document}
\parskip = 2mm
\begin{center}
{\bf\Large Subsystems and regular quotients of C-systems\footnote{\em 2000 Mathematical Subject Classification: 
03F50, 
03B15, 
03B22, 
03G25}}

\vspace{3mm}

{\large\bf Vladimir Voevodsky}\footnote{School of Mathematics, Institute for Advanced Study,
Princeton NJ, USA. e-mail: vladimir@ias.edu}$^,$\footnote{Work on this paper was supported by NSF grant 1100938.}
\vspace {3mm} 
\end{center}

\begin{abstract}C-systems were introduced by J. Cartmell under the name ``contextual categories''. In this note we study sub-objects and quotient-objects of C-systems. In the case of the sub-objects we consider all sub-objects while in the case of the quotient-objects only {\em regular} quotients that in particular have the property that the corresponding projection morphism is surjective both on objects and on morphisms.

It is one of several short papers based on the material of the "Notes on Type Systems" by the same author. 
\end{abstract}

\subsection{Introduction}
C-systems were introduced by John Cartmell (\cite{Cartmell0}, \cite[p.237]{Cartmell1}) and studied further by Thomas Streicher (see \cite[Def. 1.2, p.47]{Streicher}).  Both authors used the name “contextual categories” for these structures.  We feel it to be important to use the word ``category''  only for constructions which are invariant under equivalences of categories. For the essentially algebraic structure with two sorts ``morphisms'' and ``objects'' and operations ``source'', ``target'', ``identity'' and ``composition'' we suggest to use the word pre-category. Since the additional structures introduced by Cartmell are not invariant under equivalences we can not say that they are structures on categories but only that they are structures on pre-categories. Correspondingly, Cartmell objects should be called ``contextual pre-categories''. We suggest to use the name C-systems instead\footnote{The distinction between categories and pre-categories becomes precise in the univalent foundations where not all collections of objects are constructed from sets. See \cite{RezkCompletion} for a detailed discussion.}.

Our first result, Proposition \ref{2014.07.06.prop1}, shows that C-systems can be defined in two equivalent ways: one, as was originally done by Cartmell, using the condition that certain squares are pull-back and another using an additional operation $f\mapsto s_f$ which is almost everywhere defined and satisfies simple algebraic conditions. 

This description is useful for the study of quotients and homomorphisms of C-systems.

To any C-system $CC$ we associate a set $\wt{Ob}(CC)$ and eight partially defined operations  on the pair of sets $(Ob(CC),\wt{Ob}(CC))$. 

In Proposition \ref{2009.10.15.prop2} we construct a bijection between C-subsystems of a given C-system $CC$ and pairs of subsets $(C,\wt{C})$ in $(Ob(CC),\wt{Ob}(CC))$ which are closed under the eight operations. This provides, through the results established in \cite{Cofamodule}, an algebraic justification for what is known as the ``structural'' or ``basic'' rules of the dependent type theory (see \cite[p.585]{Jacobs1}).  More precisely, the description of subsystems constructed in the present paper provides a justification for the subset of the ``structural'' rules that concern the behavior of the type and term judgements. 

The algebraic justification for the rules that concern the type equality and the term equality judgements is achieved in Proposition \ref{2014.07.08.prop1} where we construct a bijection between {\em regular congruence relations} on $CC$ and pairs of equivalence relations on $(Ob(CC),\wt{Ob}(CC))$ which are compatible with the eight operations and satisfy some additional properties. 

Besides their role in the mathematical theory of the syntactic structures that arise in dependent type theory these two results strongly suggest that the theory of C-systems is equivalent to the theory  with the sorts $(Ob,\wt{Ob})$ and the eight operations which we consider together with some relations among these operations. 

The essentially algebraic version of this other theory is called the theory of B-systems and will be considered in the sequel \cite{Bsystems}.

This is one of the short papers based on the material of \cite{NTS} by the same author. I would like to thank the Institute Henri Poincare in Paris and the organizers of the ``Proofs'' trimester for their hospitality during the preparation of this paper. The work on this paper was facilitated by discussions with Richard Garner and Egbert Rijke.

\subsection{C-systems}
By a pre-category $C$ we mean a pair of sets $Mor(C)$ and $Ob(C)$ with four maps
$$\partial_0,\partial_1:Mor(C)\sr Ob(C)$$
$$Id:Ob(C)\sr Mor(C)$$
and 
$$\circ:Mor(C)_{\partial_1}\times_{\partial_0} Mor(C)\sr Mor(C)$$
which satisfy the well known conditions of unity and associativity (note that we write composition of morphisms in the form $f\circ g$ or $fg$ where $f:X\sr Y$ and $g:Y\sr Z$). These objects would be usually called categories but we reserve the name ``category'' for those uses of these objects that are invariant under the equivalences.  
\begin{definition}
\llabel{2014.07.06.def3}
A C0-system is a pre-category $CC$ with additional structure of the form
\begin{enumerate}
\item a function $l:Ob(CC)\sr \nn$,
\item an object $pt$,
\item a map $ft: Ob(CC)\sr Ob(CC)$, 
\item for each $X\in Ob(CC)$ a morphism $p_X:X\sr ft(X)$,
\item for each $X\in Ob(CC)$ such that $l(X)>0$ and each morphism $f:Y\sr ft(X)$ 
an object $f^*X$ and a morphism $q(f,X):f^*X\sr X$,
\end{enumerate}
which satisfies the following conditions:
\begin{enumerate}
\item $l^{-1}(0)=\{pt\}$
\item for $X$ such that $l(X)>0$ one has $l(ft(X))=l(X)-1$
\item $ft(pt)=pt$
\item $pt$ is a final object,
\item for $X\in Ob(CC)$ such that $l(X)>0$ and $f:Y\sr ft(X)$ one has $l(f^*(X))>0$, $ft(f^*X)=Y$ and the square
\begin{eq}
\llabel{2009.10.14.eq1}
\begin{CD}
f^*X @>q(f,X)>> X\\
@Vp_{f^*X}VV @VVp_XV\\
Y @>f>> ft(X)
\end{CD}
\end{eq}
commutes,
\item for $X\in Ob(CC)$ such that $l(X)>0$ one has $id_{ft(X)}^*(X)=X$ and $q(id_{ft(X)},X)=id_X$,
\item for $X\in Ob(CC)$ such that $l(X)>0$, $g:Z\sr Y$ and $f:Y\sr ft(X)$ one has $(gf)^*(X)=g^*(f^*(X))$ and $q(gf,X)=q(g,f^*X)q(f,X)$.
\end{enumerate}
\end{definition}
\begin{remark}\rm
In this definition $pt$ stands for ``point'' as a common notation for a final object of a category. The name ``ft'' stands for ``father'' which is the name given to this map in \cite[Def. 1.1]{Streicher}.
\end{remark}

For $f:Y\sr X$ in $CC$ we let $ft(f):Y\sr ft(X)$ denote the composition $f\circ p_X$.
\begin{definition}
\llabel{2014.07.06.def1}
A C-system is a C0-system together with an operation $f\mapsto s_f$ defined for all $f:Y\sr X$ such that $l(X)>0$ and such that
\begin{enumerate}
\item $s_f:Y\sr (ft(f))^*(X)$,
\item $s_f\circ p_{(ft(f))^*(X)}=Id_Y$,
\item $f=s_f\circ q(ft(f),X)$,
\item if $X=g^*(U)$ where $g:ft(X)\sr ft(U)$ then $s_f=s_{f\circ q(g,U)}$.
\end{enumerate}
\end{definition}
\begin{proposition}
\llabel{2014.07.06.prop1}
Let $CC$ be a C0-system. Then the following are equivalent:
\begin{enumerate}
\item the canonical squares (\ref{2009.10.14.eq1}) of $CC$ are pull-back squares,
\item there is given a structure of a C-system on $CC$.
\end{enumerate}
\end{proposition}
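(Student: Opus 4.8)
The plan is to prove the two implications separately, with almost all of the work concentrated in $(1)\Rightarrow(2)$. In both directions the only tools needed from Definition \ref{2014.07.06.def3} are the commutativity of the canonical squares (\ref{2009.10.14.eq1}) (axiom 6) and the functoriality identities $(gf)^*X=g^*(f^*X)$, $q(gf,X)=q(g,f^*X)\circ q(f,X)$ (axiom 7), used together with either the pull-back hypothesis or the axioms of Definition \ref{2014.07.06.def1}.

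For $(2)\Rightarrow(1)$, fix $X$ with $l(X)>0$, a morphism $f\colon Y\to ft(X)$, and a cone $(a\colon Z\to Y,\ b\colon Z\to X)$ over (\ref{2009.10.14.eq1}), i.e.\ $a\circ f=b\circ p_X$. Since then $ft(b)=b\circ p_X=a\circ f$, axiom 7 gives $(ft(b))^*X=a^*(f^*X)$, so $s_b$ is a morphism $Z\to a^*(f^*X)$ and one may set $h:=s_b\circ q(a,f^*X)\colon Z\to f^*X$. The identity $h\circ p_{f^*X}=a$ follows from commutativity of (\ref{2009.10.14.eq1}) for $q(a,f^*X)$ and axiom 2 of Definition \ref{2014.07.06.def1}, and $h\circ q(f,X)=b$ follows from $q(a,f^*X)\circ q(f,X)=q(a\circ f,X)=q(ft(b),X)$ together with axiom 3. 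For uniqueness, any $h'\colon Z\to f^*X$ with $h'\circ q(f,X)=b$ and $h'\circ p_{f^*X}=a$ has $ft(h')=a$, whence $h'=s_{h'}\circ q(a,f^*X)$ by axiom 3, while axiom 4 applied with $f^*X=f^*(X)$ gives $s_{h'}=s_{h'\circ q(f,X)}=s_b$; thus $h'=h$.

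For $(1)\Rightarrow(2)$, given $f\colon Y\to X$ with $l(X)>0$ put $g_0:=ft(f)=f\circ p_X$. The pair $(Id_Y,f)$ is a cone over the canonical square for $g_0^*X$ over $X$ (indeed $Id_Y\circ g_0=g_0=f\circ p_X$), so by hypothesis it factors uniquely through a morphism $s_f\colon Y\to g_0^*X=(ft(f))^*X$, and this construction together with its factorization equations $s_f\circ q(g_0,X)=f$, $s_f\circ p_{g_0^*X}=Id_Y$ immediately yields axioms 1--3 of Definition \ref{2014.07.06.def1}. For axiom 4, assume $X=g^*(U)$ with $g\colon ft(X)\to ft(U)$; commutativity of (\ref{2009.10.14.eq1}) for $q(g,U)$ gives $ft(f\circ q(g,U))=f\circ p_X\circ g=g_0\circ g$, and axiom 7 then gives $(g_0\circ g)^*U=g_0^*(g^*U)=g_0^*X$ and $q(g_0\circ g,U)=q(g_0,X)\circ q(g,U)$ --- so $s_{f\circ q(g,U)}$ has the same source and target as $s_f$ and satisfies $s_{f\circ q(g,U)}\circ p_{g_0^*X}=Id_Y$. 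It then suffices to prove $s_{f\circ q(g,U)}\circ q(g_0,X)=f$, since this exhibits $s_{f\circ q(g,U)}$ as a factorization of the cone $(Id_Y,f)$ through the square for $g_0^*X$ over $X$, forcing $s_{f\circ q(g,U)}=s_f$ by uniqueness. To obtain $s_{f\circ q(g,U)}\circ q(g_0,X)=f$ I will invoke the pull-back property of the \emph{other} canonical square, the one for $X=g^*U$ over $U$: the morphisms $s_{f\circ q(g,U)}\circ q(g_0,X)$ and $f$ from $Y$ to $g^*U$ both become $f\circ q(g,U)$ after composition with $q(g,U)$ (by $q(g_0,X)\circ q(g,U)=q(g_0\circ g,U)$ and the factorization equation of $s_{f\circ q(g,U)}$) and both become $g_0$ after composition with $p_X=p_{g^*U}$ (by commutativity of (\ref{2009.10.14.eq1}) for $q(g_0,X)$ and $s_{f\circ q(g,U)}\circ p_{g_0^*X}=Id_Y$, respectively by definition of $g_0$), hence they coincide.

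The verification of axiom 4 in $(1)\Rightarrow(2)$ is the step I expect to be the main obstacle: it is the only place where the pull-back property must be applied at two different levels, and it requires keeping careful track of the source/target information furnished by axiom 7 of Definition \ref{2014.07.06.def3} so that all the composites in question are actually defined and land in the right objects. Everything else is a direct diagram chase.
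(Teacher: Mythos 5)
Your proof is correct and follows essentially the same route as the paper: the same construction $s_b\circ q(a,f^*X)$ of the universal morphism in $(2)\Rightarrow(1)$, with uniqueness obtained from axioms 3 and 4 of Definition \ref{2014.07.06.def1}, and the same definition of $s_f$ from the pull-back property in $(1)\Rightarrow(2)$. The only difference is that the paper dismisses the latter direction with ``define $s_f$ in the obvious way,'' whereas you explicitly verify axiom 4 of Definition \ref{2014.07.06.def1} by applying the pull-back hypothesis at two levels --- a correct and genuinely useful filling-in of a step the paper leaves to the reader.
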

\begin{proof}
Let us show first that if we are given an operation $f\mapsto s_f$ satisfying the conditions of Definition \ref{2014.07.06.def1} then the canonical squares of $CC$ are pull-back squares.

Let $l(X)>0$ and $f: Y\sr ft(X)$. We want to show that for any $Z$ the map 
$$(g:Z\sr f^*(X))\mapsto (ft(g), g\circ q(f,X))$$
is injective and that for any $g_1:Z\sr Y$, $g_2:Z\sr X$ such that $g_1\circ f = ft(g_2)$ there exists a unique $g:Z\sr Y$ such that $ft(g)=g_1$ and $g\circ q(f,X)=g_2$.

Let $g,g':Z\sr f^*(X)$ be such that $ft(g)=ft(g')$ and $g\circ q(f,X)=g'\circ q(f,X)$. Then
$$g=s_{g}\circ q(ft(g),f^*(X))=s_{g\circ q(f,X)}\circ q(ft(g),f^*(X))=$$
$$s_{g'\circ q(f,X)}\circ q(ft(g'),f^*(X))=s_{g'}\circ q(ft(g'),f^*(X))=g'.$$

If we are given $g_1, g_2$ as above let $g=s_{g_2}\circ q(g_1, f^*(X))$. Then:
$$ft(g)=s_{g_2}\circ ft(q(g_1, f^*(X)))=s_{g_2}\circ p_{g_1^*(f_*(X))}\circ g_1=g_1$$
$$g\circ q(f,X)=s_{g_2}\circ q(g_1, f^*(X))\circ q(f,X)=s_{g_2}\circ q(g_1\circ f, X)=s_{g_2}\circ q(ft(g_2),X)=g_2.$$

If on the other hand the canonical squares of $CC$ are pull-back then we can define the operation $s_f$ in the obvious way and moreover such an operation is unique because of the uniqueness part of the definition of pull-back. This implies the assertion of the proposition. 
\end{proof} 
\begin{remark}\rm
As was pointed out by one of the referees,  operation $s_f$ was considered for contextual categories by Cartmell who denoted it by $f\mapsto `f`$, see \cite[2.19]{Cartmell0}.
\end{remark}
\begin{remark}\rm
Let 
$$Ob_n(CC)=\{X\in Ob(CC)\,|\,l(X)=n\}$$
$$Mor_{n,m}(CC)=\{f:Mor(CC)|\partial_0(f)\in Ob_n\,and\,\partial_1(f)\in Ob_m\}.$$
One can reformulate the definitions of C0-systems and C-systems using $Ob_n(CC)$ and $Mor_{n,m}(CC)$ as the underlying sets together with the obvious analogs of maps and conditions of the definition given above. In this reformulation there will be no use of the function $l$ and of the condition $l(X)>0$. 

This shows that C0-systems and C-systems can be considered as models of essentially algebraic theories with sorts $Ob_n$, and $Mor_{n,m}$ and in particular all the results of \cite{Palmgren1} are applicable to C-systems. 
\end{remark}
\begin{remark}\rm
Note that as defined C0-systems and C-systems can not be described, in general, by generators and relations. For example, what is a C0-system generated by $X\in Ob$? There is no such universal object because we do not know what $l(X)$ is. 

This problem is, of course, eliminated by using the definition with two infinite families of sorts $Ob_n$ and $Mor_{n,m}$. 
\end{remark}
\begin{remark}\rm
The notion of a homomorphism of C0-systems and C-systems and the associated definitions of the categories of C0-systems and C-systems are obtained by the specialization of the corresponding general notions for models of essentially algebraic theories. Equivalently homomorphisms are defined as homomorphisms of pre-categories that commute with the length functions and the operations. The category of C-systems is a full subcategory of the category of C0-systems.  Since they are categories of models of essentially algebraic theories they have all limits and colimits. According to the results and observations in \cite{Cartmell0} the category of C-systems is equivalent to a suitably defined category of the GATs (Generalized Algebraic Theories). The category of GATs is studied in \cite{Garner}. 

Presentation of C-systems in terms of GATs uses constructions that are substantially non-finitary - a C-system given by finite sets of generators and relations can rarely be represented by a generalized algebraic theory with finitely many generating objects. 

The C-systems that correspond to finitely presented GATs may play a special role in the theory of C-systems but what such a role might be remains to be discovered.
\end{remark}
\begin{remark}\rm
Note that the additional structure on a pre-category that defines a C0-system is not an additional essentially algebraic structure and can not be made to be such by modification of definitions. Indeed, the pre-category underlying the product of two C0-systems (defined as the categorical product in the category of C0-systems and their homomorphisms) is not the product of the underlying pre-categories but a sub-pre-category in this product which consists of pairs of objects $(X,Y)$ such that $l(X)=l(Y)$. 
\end{remark}

\subsection{The set $\wt{Ob}$ of a C-system.}

For a C-system $CC$ denote by $\wt{Ob}(CC)$ the subset of $Mor(CC)$ which consists of elements $s$ of the form $s:ft(X)\sr X$ where $l(X)>0$ and such that $s\circ p_X=Id_{ft(X)}$. In other words, $\wt{Ob}$ is the set of sections of the canonical projections $p_X$ for $X$ such that $l(X)>0$. 

Note that $f\mapsto s_f$ is an operation from $\{f:Y\sr X|\,l(X)>0\}$ to $\wt{Ob}$. 

For $X\in Ob(CC)$ and $i\ge 0$ such that $l(X)\ge i$ denote by $p_{X,i}$ the composition of the canonical projections $X\sr ft(X)\sr\dots\sr ft^i(X)$ such that $p_{X,0}=Id_X$ and for $l(X)>0$, $p_{X,1}=p_X$.  If $l(X)<i$ we will consider $p_{X,i}$ to be undefined.  {\em All of the considerations involving $p_{X,i}$'s below are modulo the qualification that $p_{X,i}$ is defined, i.e., that $l(X)\ge i$.}

For $X$ such that $l(X)\ge i$ and $f:Y\sr ft^i(X)$ denote by $f^*(X,i)$ the objects and by $q(f,X,i):f^*(X,i)\sr X$ the morphisms defined inductively by the rule 
$$f^*(X,0)=Y\,\,\,\,\,\,\,\,\,q(f,X,0)=f,$$
$$f^*(X,i+1)=q(f,ft(X),i)^*(X)\,\,\,\,\,\,\,\,\,q(f,X,i+1)=q(q(f,ft(X),i), X).$$
If $l(X)<i$, then $q(f,X,i)$ is undefined since $q(-,X)$ is undefined for $X=pt$ and again, as in the case of $p_{X,i}$,  {\em all of the considerations involving $q(f,X,i)$ are modulo the qualification that $l(X)\ge i$}. 

For $i\ge 1$, $(s:ft(X)\sr X)\in \wt{Ob}$ such that $l(X)\ge i$, and $f:Y\sr ft^i(X)$ let 
$$f^*(s,i):f^*(ft(X),i-1)\sr f^*(X,i)$$
be the pull-back of the section $s:ft(X)\sr X$ along the morphism $q(f,ft(X),i-1)$ i.e. the only morphism such that
$$f^*(s,i)\circ p_{f^*(X,i)}=Id_{f^*(ft(X),i-1)}$$
$$f^*(s,i)\circ q(f,X,i)=q(f,ft(X),i-1)\circ s$$
We again use the agreement that always when $f^*(s,i)$ is used the condition $l(X)\ge i$ is part of the assumptions. 

Consider the following operations on the pair of sets $Ob=Ob(CC)$ and $\wt{Ob}=\wt{Ob}(CC)$:
\begin{enumerate}
\item $pt\in Ob$,
\item $ft:Ob\sr Ob$,
\item $\partial:\wt{Ob}\sr Ob$ of the form $(s:ft(X)\sr X)\mapsto X$,
\item $T$ which is defined on pairs $(Y,X)\in Ob\times Ob$ such that $l(Y)>0$ and there exists (a necessarily unique) $l(X)\ge i\ge 1$ with $ft(Y)=ft^i(X)$ and for such pairs $T(Y,X)=p_Y^*( X,i)$,
\item $\wt{T}$ which is defined on pairs $(Y,(r:ft(X)\sr X))\in Ob\times\wt{Ob}$ such that $l(Y)> 0$ and there exists (a necessarily unique) $l(X)\ge i\ge 1$ such that $ft(Y)=ft^i(X)$ and for such pairs $\wt{T}(Y,r)=p_Y^*(r,i)$,
\item $S$ which is defined on pairs $((s:ft(Y)\sr Y),X)\in \wt{Ob}\times Ob$ such that there exists (a necessarily unique) $i\ge 1$ such that $Y=ft^i(X)$ and for such pairs $S(s,X)=s^*(X,i)$,
\item $\wt{S}$ which is defined on pairs $((s:ft(Y)\sr Y),(r:ft(X)\sr X))\in \wt{Ob}\times\wt{Ob}$ such that there exists (a necessarily unique) $i\ge 1$ such that $Y=ft^i(X)$ and for such pairs $\wt{S}(s,r)=s^*(r,i)$,
\item $\delta$ which is defined on elements $X\in Ob$ such that $l(X)>0$ and for such elements $\delta(X)\in \wt{Ob}$ is $s_{Id_X}:X\sr p_X^*(X)$.
\end{enumerate}

\subsection{C-subsystems.} 

A C-subsystem $CC'$ of a C-system $CC$ is a sub-pre-category of the underlying pre-category which is closed, in the obvious sense under the operations which define the C-system on $CC$. 

A C-subsystem is itself a C-system with respect to the induced structure. 
\begin{lemma}
\llabel{2009.10.15.l1}
Let $CC$ be a C-system and $CC'$, $CC''$ be two C-subsystems such that $Ob(CC')=Ob(CC'')$ (as subsets of $Ob(CC)$) and $\wt{Ob}(CC')=\wt{Ob}(CC'')$ (as subsets of $\wt{Ob}(CC)$). Then $CC'=CC''$.
\end{lemma}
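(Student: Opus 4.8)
The plan is to show that a C-subsystem of $CC$ is completely determined by its sets of objects and of sections $\wt{Ob}$, by proving that the set of \emph{all} morphisms of any C-subsystem is forced once these two sets are fixed. The key observation is that in a C-system every morphism can be rewritten in terms of $ft$, the canonical projections $p_X$, the $q(f,X)$, and the section operation $f \mapsto s_f$; the operations $p_X$ and $q(f,X)$ are part of the C0-system structure, and hence determined by the object-set, while sections live in $\wt{Ob}$. So the route is: given $f : Y \to X$ in $CC$, decompose $f$ into a product of ``elementary'' morphisms whose membership in a C-subsystem $CC'$ is controlled by $Ob(CC')$ and $\wt{Ob}(CC')$ alone.

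First I would reduce to the case where $f$ is a morphism whose target has length $>0$, since a morphism into $pt$ is unique (final object) and hence present in any subsystem. Then I would use the defining identity $f = s_f \circ q(ft(f), X)$ from Definition \ref{2014.07.06.def1}(3): this expresses $f$ as the composite of a section $s_f \in \wt{Ob}(CC)$ and a morphism $q(ft(f), X)$, where $ft(f) = f \circ p_X : Y \to ft(X)$ is a morphism with strictly shorter target. Arguing by induction on $l(X)$: if $Y, X \in Ob(CC')$ then, since $CC'$ is closed under $ft$ and $p$, we have $ft(X) \in Ob(CC')$, and by the inductive hypothesis $ft(f)$ lies in $CC'$ (its target has length $l(X)-1$); then $q(ft(f),X)$ lies in $CC'$ because $CC'$ is closed under the $q$-operation; and $s_f \in \wt{Ob}(CC')$ by hypothesis (as $f \in Mor(CC')$ forces $s_f \in \wt{Ob}(CC')$, or rather: we want to show any $f$ with source and target in $Ob(CC')$ whose section-components lie in $\wt{Ob}(CC')$ is in $CC'$ — this needs to be set up carefully as the right inductive statement). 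The composite $s_f \circ q(ft(f), X)$ is then in $CC'$ since subsystems are closed under composition.

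Having shown that the set of morphisms of any C-subsystem is a function of the pair $(Ob, \wt{Ob})$, the lemma follows immediately: $Ob(CC') = Ob(CC'')$ and $\wt{Ob}(CC') = \wt{Ob}(CC'')$ give $Mor(CC') = Mor(CC'')$, and since all the structure maps ($ft$, $p$, $q$, $l$, $pt$, composition, $s_-$) are restrictions of those of $CC$, the two subsystems coincide as C-systems.

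The main obstacle will be getting the induction hypothesis stated in exactly the right form. The naive statement ``every morphism with source and target in $Ob(CC')$ lies in $Mor(CC')$'' is false — a subsystem need not contain all such morphisms — so the correct inductive claim must track the sections appearing in the decomposition, roughly: ``if $f : Y \to X$ with $Y, X \in Ob(CC')$ and every section arising in the iterated $s_f$-decomposition of $f$ lies in $\wt{Ob}(CC')$, then $f \in Mor(CC')$''. One must check that the decomposition $f = s_f \circ q(ft(f), X)$ genuinely reduces the complexity (the target length drops by one at each step, so the recursion terminates after $l(X)$ steps at a morphism into $pt$), and that closure of $CC'$ under the C0-operations really does deliver $q(ft(f), X) \in Mor(CC')$ given $ft(f) \in Mor(CC')$ — this is where the hypothesis that $CC'$ is closed under the C-system operations, not merely a sub-pre-category, is essential.
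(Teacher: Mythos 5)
Your proposal is correct and follows essentially the same route as the paper: decompose $f = s_f \circ q(ft(f),X)$ and induct on $l(X)$, with the base case handled by finality of $pt$. The ``right inductive statement'' you worry about at the end is simply the implication $f\in Mor(CC') \Rightarrow f\in Mor(CC'')$ itself — since $CC'$ is closed under $f\mapsto s_f$, membership $f\in Mor(CC')$ already forces $s_f\in \wt{Ob}(CC')=\wt{Ob}(CC'')$, so there is no need to track section-components separately.
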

\begin{proof}
Let $f:Y\sr X$ be a morphism in $CC'$. We want to show that it belongs to $CC''$. Proceed by induction on $m=l(X)$. For $m=0$ the assertion is obvious. Suppose that $m>0$. Since $CC'$ is a C-subsystem we have a commutative diagram
\begin{eq}
\llabel{2009.11.07.oldeq1}
\begin{CD}
Y\\
@Vs_fVV\\
(f\circ p_X)^*X @>q(f\circ p_X,X)>> X\\
@VVV @VVp_XV\\
Y @>f\circ p_X>> ft(X)
\end{CD}
\end{eq}
in $CC'$ such that $f= s_f\,q(p_{X}f,X)$. By the inductive assumption $f\circ p_X$ is in $CC''$ and since the square is the canonical pull-back square we conclude that $q(p_{X}f,X)$ is in $CC''$. On the other hand $s_f\in CC''$ since $\wt{Ob}(CC')=\wt{Ob}(CC'')$. Therefore $f\in CC''$. 
\end{proof}
\begin{remark}\rm
In Lemma \ref{2009.10.15.l1}, it is sufficient to assume that $\wt{Ob}(CC')=\wt{Ob}(CC'')$. The condition $Ob(CC')=Ob(CC'')$ is then also satisfied. Indeed, let $X\in Ob(CC')$ and $l(X)>0$. Then $p_X^*X$ is the product $X\times_{ft(X)} X$ in $CC$. Consider the diagonal section $\delta_X:X\sr p_X^*X$ of $p_{p_X^*(X)}$. Since $CC'$ is assumed to be a C-subsystem  we conclude that $\delta_X\in \wt{Ob}(CC')=\wt{Ob}(CC'')$ and therefore $X\in Ob(CC'')$. It is however more convenient to think of C-subsystems in terms of subsets of both $Ob$ and $\wt{Ob}$.
\end{remark}
\begin{proposition}
\llabel{2009.10.15.prop2}
A pair $(B,\wt{B})$ where $B\subset Ob(CC)$ and $\wt{B}\subset \wt{Ob}(CC)$ corresponds to a C-subsystem of $CC$ if and only if the following conditions hold:
\begin{enumerate}
\item $pt\in B$,
\item if $X\in B$ then $ft(X)\in B$,
\item if $s\in \wt{B}$ then $\partial(s)\in B$,
\item if $Y\in B$ and $r\in \wt{B}$ then $\wt{T}(Y,r)\in \wt{B}$,
\item if $s\in \wt{B}$ and $r\in \wt{B}$ then $\wt{S}(s,r)\in \wt{B}$,
\item if $X\in B$ then $\delta(X)\in \wt{B}$. 
\end{enumerate}
\end{proposition}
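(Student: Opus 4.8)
The plan is to prove the two implications separately: that the pair associated to a C-subsystem satisfies 1--6 is routine, while reconstructing a C-subsystem from a pair satisfying 1--6 is the real content. Uniqueness of the reconstructed system is guaranteed by Lemma \ref{2009.10.15.l1}, so ``corresponds to a C-subsystem'' is unambiguous. \emph{Necessity.} Let $CC'$ be a C-subsystem, $B=Ob(CC')$, $\wt B=\wt{Ob}(CC')$. Conditions 1 and 2 hold because $pt$ and $ft$ are part of the data a C-subsystem is closed under, and 6 holds because $\delta(X)=s_{Id_X}$ results from applying $f\mapsto s_f$ to the morphism $Id_X$ of $CC'$. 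Condition 3 holds because an element of $\wt B$ is a morphism of $CC'$ with codomain $\partial(s)$, so $\partial(s)\in Ob(CC')$. For 4 and 5 one unwinds the definitions: $\wt T(Y,r)=p_Y^*(r,i)$ and $\wt S(s,r)=s^*(r,i)$ are assembled from the projections $p_{(-)}$, the iterated pullbacks $q(-,-,j)$ and the operation $f\mapsto f^*(s,j)$ of pulling back a section, all built from $p$, $f\mapsto f^*X$, $f\mapsto q(f,X)$ and composition, under all of which $CC'$ is closed.

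\emph{Sufficiency, derived closure.} Assume $(B,\wt B)$ satisfies 1--6. First note that from 3 and 6, $B$ is closed under $X\mapsto\partial(\delta(X))=p_X^*(X)$. More importantly, $B$ is closed under $T$ and $S$, via
\begin{eq}
T(Y,X)=ft\big(\partial(\wt T(Y,\delta(X)))\big),\qquad S(s,X)=ft\big(\partial(\wt S(s,\delta(X)))\big).
\end{eq}
These hold because straight from the definitions $\partial(\wt T(Y,r))=T(Y,\partial(r))$ and $\partial(\wt S(s,r))=S(s,\partial(r))$ with the same index $i$ on both sides; taking $r=\delta(X)$ gives $\partial(r)=p_X^*(X)$ with $ft(p_X^*(X))=X$, so the index of $\wt T(Y,\delta(X))$ exceeds that of $T(Y,X)$ by one and applying $ft$ cancels exactly this shift. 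Together with 2, 3, 4, 6 (resp.\ 2, 3, 5, 6) this gives the claimed closure. I also record the auxiliary fact that for $X\in B$ the section $s_{p_{X,j}}$ lies in $\wt B$ for every $j$ with $1\le j<l(X)$: this follows by induction on $j$, the base case being the identity $s_{p_X}=\wt T(X,\delta(ft(X)))$ (checked against the two defining equations of $s_{p_X}$ using $\delta(ft(X))\circ q(p_{ft(X)},ft(X))=Id_{ft(X)}$, itself an instance of part 3 of Definition \ref{2014.07.06.def1}) and the inductive step using the composition formula for $f\mapsto s_f$ together with 4 and 5.

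\emph{Sufficiency, construction.} Put $Ob(CC')=B$, and let $Mor(CC')$ be the set of $f\colon Y\sr X$ in $CC$ with $Y,X\in B$ such that $s_{ft^k(f)}\in\wt B$ for all $k$ with $0\le k<l(X)$ --- the morphisms whose iterated canonical decomposition $f\mapsto (s_f,ft(f))$ stays inside $\wt B$. One checks in order: (a) $Id_X,p_X\in Mor(CC')$ for $X\in B$, since $ft^k(Id_X)=p_{X,k}$ and $ft^k(p_X)=p_{X,k+1}$, using condition 6 and the auxiliary fact; (b) $Mor(CC')$ is closed under composition, by induction on $l(X)$ for $g\colon X\sr Z$, using $ft(fg)=f\circ ft(g)$ and a formula for $s_{fg}$ in terms of $s_f$, $s_g$, $\wt T$, $\wt S$ which shows $s_{fg}\in\wt B$ whenever its constituents are; (c) for $f\in Mor(CC')$ with codomain $ft(X)$, $X\in B$, one has $f^*(X)\in B$ and $q(f,X)\in Mor(CC')$: writing $f=s_f\circ q(ft(f),ft(X))$ and using $(ab)^*X=a^*(b^*X)$ one expresses $f^*(X)$ as an iterate of $S$ and $T$ applied to $X$, to the sections $s_{ft^k(f)}\in\wt B$ and to the objects $ft^k(Y)\in B$, so $f^*(X)\in B$ by the closure established above, and the decomposition of $q(f,X)$ is then built from $\wt T$, $\wt S$ applied to elements already in $\wt B$; (d) $s_f\in\wt B\subseteq Mor(CC')$ whenever $f\in Mor(CC')$ has codomain of positive length, by definition. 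Hence $CC'$ is a sub-pre-category closed under the operations defining the C-system, i.e.\ a C-subsystem with $Ob(CC')=B$. Finally $\wt{Ob}(CC')=\wt B$: if $s\in\wt{Ob}(CC')$ then $s=s_s$ (since $ft(s)=Id$, so $(ft(s))^*(\partial s)=\partial s$ and $s=s_s\circ Id$), and $s_s=s_{ft^0(s)}\in\wt B$; conversely for $s\in\wt B$ one has $\partial(s)\in B$ by 3, $ft(\partial s)\in B$ by 2, and the sections $s_{ft^k(s)}$ are successively $s$, $\delta(ft(\partial s))$ and the $s_{p_{ft(\partial s),j}}$, all in $\wt B$, so $s\in Mor(CC')$ and hence $s\in\wt{Ob}(CC')$.

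\emph{Main obstacle.} The substance of the proof is the compositional bookkeeping behind steps (b) and (c): establishing the identities that express $s_{fg}$, $f^*(X)$ and $q(f,X)$ through the eight operations $ft,\partial,T,\wt T,S,\wt S,\delta$ and composition, handling correctly the indices in $q(-,-,i)$, $f^*(-,i)$ and $f^*(s,i)$. Once these identities, the two displayed formulas for $T$ and $S$, and the auxiliary fact about $s_{p_{X,j}}$ are in place, the closure of $B$ and $\wt B$ under everything is a formal consequence of conditions 1--6, and invoking Lemma \ref{2009.10.15.l1} completes the argument.
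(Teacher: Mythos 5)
Your proposal is correct and follows essentially the same route as the paper: the same definition of $Mor(CC')$ via the canonical decomposition $f\mapsto (ft(f),s_f)$ (yours is the unrolled form of the paper's inductive one), the same formulas $s_{p_{X,i}}$ as iterated $\wt{T}$'s of $\delta$'s, and the same sequence of closure checks for composition, $f^*(X)$, $q(f,X)$ and $s_f$. The ``compositional bookkeeping'' you defer in steps (b) and (c) is exactly the paper's Lemma \ref{2009.10.16.l3} (closure of $\wt{B}$ under $f^*(r,i)$ for $f\in Mor(CC')$, proved by induction on the length of the codomain via $f^*(r,i)=s_f^*(ft(f)^*(r,i+1),i)$), from which $s_{gf}=g^*(s_f,1)$ and $s_{q(f,X)}=f^*(\delta(X),2)$ give the composition and pullback cases.
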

Conditions (4) and (5) are illustrated by the following diagrams:
$$
\begin{CD}
p_{Y}^*(ft(X),i-1) @>q(p_{Y},ft(X),i-1)>> ft(X)\\
@VVq(p_{Y},ft(X),i-1)^*(r)V @VVrV\\
p_{Y}^*(X,i) @>q(p_{Y},X,i)>> X\\
@VVV @VVp_XV\\
p_{Y}^*(ft(X),i-1) @>q(p_{Y},ft(X),i-1)>> ft(X)\\
@VVV @VVV\\
\dots @. \dots\\
@VVV @VVV\\
Y @>p_{Y}>> ft^i(X)
\end{CD}
\,\,\,\,\,\,\,\,\,\,\,\,\,\,\,\,
\begin{CD}
s^*(ft(X),i-1) @>q(s,ft(X),i-1)>> ft(X)\\
@VVq(s,ft(X),i-1)^*(r)V @VVrV\\
s^*(X,i) @>q(s,X,i)>> X\\
@VVV @VVp_XV\\
s^*(ft(X),i-1) @>q(s,ft(X),i-1)>> ft(X)\\
@VVV @VVV\\
\dots @. \dots\\
@VVV @VVV\\
ft^{i+1}(X) @>s>> ft^i(X)
\end{CD}$$
\begin{proof}
The "only if" part of the proposition is straightforward. Let us prove that for any $(B,\wt{B})$ satisfying the conditions of the proposition there exists a C-subsystem $CC'$ of $CC$ such that $B=Ob(CC')$ and $\wt{B}=\wt{Ob}(CC')$. 

Define a candidate subcategory $CC'$ setting $Ob(CC')=B$ and defining the set $Mor(CC'
)$ of morphisms of $CC'$ inductively by the conditions:
\begin{enumerate}
\item $Y\sr pt$ is in $Mor(CC')$ if and only if $Y\in B$,
\item $f:Y\sr X$ is in $Mor(CC')$  if and only if $X\in B$, $ft(f)\in Mor(CC')$ and $s_f\in \wt{B}$.
\end{enumerate}
(Note that for $(f:Y\sr X)\in Mor(CC')$ one has $Y\in B$ since $s_f:Y\sr (ft(f))^*(X)$).

Let us show that if the conditions of the proposition are satisfied then $(Ob(CC'), Mor(CC'))$ form a C-subsystem of $CC$. 

The subset $Ob(CC')$ contains $pt$ and is closed under $ft$ map by the first two conditions. The following lemma shows that $Mor(CC')$ contains identities and the compositions of the canonical projections.
\begin{lemma}
\llabel{2009.10.16.l1}
Under the assumptions of the proposition, if $X\in B$ and $i\ge 0$ then $p_{X,i}:X\sr ft^i(X)$ is in $Mor(CC')$.
\end{lemma}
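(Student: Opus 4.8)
\textbf{Proof plan for Lemma \ref{2009.10.16.l1}.}

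The plan is to induct on $i$, with the base cases $i=0$ and $i=1$ handled directly and the inductive step reduced to the case $i=1$ by the defining recursion of $Mor(CC')$. For $i=0$ we have $p_{X,0}=Id_X$, and we must check that $Id_X\in Mor(CC')$ for every $X\in B$; by the inductive definition of $Mor(CC')$ (clause (2)) this amounts to verifying that $ft(Id_X)=p_X\in Mor(CC')$ and that $s_{Id_X}=\delta(X)\in\wt B$. The latter holds by condition (6) of the proposition. So the real content is to show $p_X=p_{X,1}\in Mor(CC')$ for all $X\in B$ with $l(X)>0$, and then to bootstrap this up to general $i$.

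For the case $i=1$, i.e. $p_X:X\sr ft(X)$: again using clause (2) of the definition of $Mor(CC')$, since $ft(X)\in B$ by condition (2), it suffices to check that $ft(p_X)=p_X\circ p_{ft(X)}=p_{X,2}\in Mor(CC')$ and that $s_{p_X}\in\wt B$. Here I expect $s_{p_X}$ to be identifiable with $\delta(ft(X))$ (or an appropriate $\wt T$-image of it): one computes $ft(p_X)=ft^2(X)$-target composite, so $(ft(p_X))^*(ft(X))=p_{ft(X)}^*(ft(X))$, and the section $s_{p_X}$ of its projection coming from $p_X$ should be exactly the diagonal $\delta(ft(X))$ by the uniqueness clause in Definition \ref{2014.07.06.def1}(4) together with condition (6). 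This looks circular because verifying $p_X\in Mor(CC')$ needs $p_{X,2}\in Mor(CC')$, which needs $p_{X,3}\in Mor(CC')$, and so on — but the recursion terminates since $l(ft^k(X))$ strictly decreases, so after $l(X)$ steps one reaches $pt$ and clause (1) closes the induction. Concretely I would run a secondary (descending) induction: prove that $p_{ft^k(X),j}\in Mor(CC')$ for all $j$ by downward induction on $k$ starting from $k=l(X)$ (where the source is $pt$).

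For general $i\ge 2$ with $l(X)\ge i$: note $p_{X,i}=p_X\circ p_{ft(X),i-1}$, and $s_{p_{X,i}}$ needs to be shown to lie in $\wt B$. I expect $s_{p_{X,i}}$ to again be (the image under iterated $\wt T$ of) a diagonal $\delta(ft^{i-1}(X))$-type section, obtained by repeatedly applying Definition \ref{2014.07.06.def1}(4) to peel off the factors of the composite; condition (4) of the proposition (closure under $\wt T$) together with condition (6) then places it in $\wt B$. Combined with $ft(p_{X,i})=p_{X,i+1}$ and the termination argument above, clause (2) of the definition of $Mor(CC')$ finishes the induction.

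\textbf{Main obstacle.} The genuine difficulty is the apparent circularity in the recursive definition of $Mor(CC')$: membership of $p_{X}$ is defined in terms of membership of $ft(p_X)=p_{X,2}$, etc. Resolving it requires organizing the induction so that it is anchored at $pt$ (length $0$) and proceeds by decreasing length of the relevant object, rather than trying to verify membership "top down". The second, more computational, obstacle is the precise identification of each $s_{p_{X,i}}$ as an element built from the diagonals $\delta(\cdot)$ by the operations $\wt T$ (and possibly $\wt S$), which is where Definition \ref{2014.07.06.def1}(4) and the explicit description of the eight operations from Section 3 must be used carefully; once that identification is made, closure conditions (4) and (6) of the proposition do the rest.
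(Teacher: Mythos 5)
Your plan is essentially the paper's proof: the paper anchors the induction at $p_{X,n}:X\sr pt$ (the first constructor of $Mor(CC')$) and runs a decreasing induction on $i$ using $ft(p_{X,i-1})=p_{X,i}$, then identifies $s_{p_{X,i-1}}$ as the iterated $\wt{T}$-image $(p_{X,i-1},1)^*(\delta(ft^{i-1}(X)))$ of a diagonal, which lies in $\wt{B}$ by conditions (2), (6) and repeated applications of (4) — exactly the two points you isolate as the crux. The only cosmetic differences are that the paper organizes this as a single downward induction on $i$ for fixed $X$ (rather than your auxiliary induction over the sources $ft^k(X)$, which does no extra work since the chain of fathers of $p_{X,i}$ keeps the source fixed and only shortens the target), and that only $\wt{T}$, not $\wt{S}$, is needed.
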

\begin{proof}
Let $n=l(X)$ and proceed by decreasing induction on $i$ starting with $n$.  The morphism $p_{X,n}$ is of the form $X \sr pt$ and therefore it belongs to $Mor(CC')$ by the first constructor of $Mor(CC')$. By induction it remains to show that if $X\in B$ and $p_{X,i}\in Mor(CC')$ then $p_{X,i-1}\in Mor(CC')$. We have $ft(p_{X,i-1})=p_{X,i}$ and
$$s_{p_{X,i-1}}=(p_{X,i-1},1)^*(\delta(ft^{i-1}(X)))$$
We have $\delta(ft^{i-1}(X))\in \wt{B}$ by conditions (2) and (6).  The pull-back $(p_{X,i-1},1)^*$ can be expressed as the composition of operations $\wt{T}(ft^j(X),-)$, $j=i-1,\dots,1$ and therefore $s_{p_{X,i-1}}$ is in $\wt{B}$ by repeated application of condition (4). 
\end{proof}
\begin{lemma}
\llabel{2009.10.16.l3}
Under the assumptions of the proposition, let $(r:ft(X)\sr X)\in \wt{B}$, $i\ge 1$,  and $(f:Y\sr ft^i(X))\in Mor(CC')$. Then $f^*(r,i):ft(f^*(X,i))\sr f^*(X,i)$ is in $\wt{B}$. 
\end{lemma}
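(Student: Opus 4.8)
The plan is to prove the statement by induction on the integer $n = l(Y) + l(ft^i(X))$, using the inductive description of $Mor(CC')$ to strip off one ``layer'' of $f$ at a time and to recognize the resulting pull-back of $r$ as a value of the operation $\wt{T}$ or of the operation $\wt{S}$ of Proposition \ref{2009.10.15.prop2}. Membership in $\wt{B}$ will then follow from condition (4) or (5) of that proposition, applied to an object of $B$ (respectively a section in $\wt{B}$) supplied by the inductive hypothesis.

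Before the induction I would record the bookkeeping identities for the iterated operations $f^*(-,i)$, $q(f,-,i)$ and the pull-back $f^*(s,i)$ of a section. From conditions (6) and (7) of Definition \ref{2014.07.06.def3}, by induction on the relevant index, one gets $Id^*(X,i)=X$, $q(Id_{ft^i(X)},X,i)=Id_X$, the composition identity $(g\circ h)^*(X,i)=g^*(h^*(X,i),i)$ with its $q$-analogue, the reassociation identity $f^*(X,a+b)=q(f,ft^b(X),a)^*(X,b)$ with its $q$-analogue, and the compatibilities with $ft$ (e.g. $ft(f^*(X,m))=f^*(ft(X),m-1)$). The corresponding statements for the pull-back of a section, $Id^*(s,i)=s$, $(g\circ h)^*(s,i)=g^*(h^*(s,i),i)$ and $f^*(s,a+b)=q(f,ft^b(X),a)^*(s,b)$, follow from the object versions, since $f^*(-,i)$ on a section is pinned down by two equations (the canonical squares of $CC$ being pull-back, Proposition \ref{2014.07.06.prop1}). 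These are routine; I would isolate them in a preliminary lemma or quote them from \cite{NTS}.

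For the base case $n=0$ we have $l(Y)=l(ft^i(X))=0$, so $Y=ft^i(X)=pt$ and, $pt$ being final, $f=Id_{pt}$; hence $f^*(r,i)=r\in\wt{B}$. For the induction step I distinguish the two ways $f$ can lie in $Mor(CC')$. If $ft^i(X)=pt$, then $f:Y\sr pt$ is forced to be $p_{Y,l(Y)}$ and $Y\in B$ by the first constructor; writing $f=p_Y\circ p_{ft(Y),l(Y)-1}$ and using the composition identity gives $f^*(r,i)=p_Y^*\big(p_{ft(Y),l(Y)-1}^*(r,i),i\big)=\wt{T}\big(Y,\,p_{ft(Y),l(Y)-1}^*(r,i)\big)$. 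Since $ft(Y)\in B$ by condition (2), $p_{ft(Y),l(Y)-1}$ lies in $Mor(CC')$, and as $l(ft(Y))+l(pt)<n$ the inductive hypothesis puts $p_{ft(Y),l(Y)-1}^*(r,i)$ in $\wt{B}$; condition (4) then gives $f^*(r,i)\in\wt{B}$. If $ft^i(X)\neq pt$, then $f$ lies in $Mor(CC')$ by the second constructor, so $ft^i(X)\in B$, $ft(f)\in Mor(CC')$ and $s_f\in\wt{B}$, and by condition (3) of Definition \ref{2014.07.06.def1} one has $f=s_f\circ q(ft(f),ft^i(X))$. Applying the composition identity and then the reassociation identity with $a=1$, $b=i$ (so $q(ft(f),ft^i(X),1)=q(ft(f),ft^i(X))$) rewrites this as $f^*(r,i)=s_f^*\big(q(ft(f),ft^i(X))^*(r,i),i\big)=s_f^*\big(ft(f)^*(r,i+1),i\big)=\wt{S}\big(s_f,\,ft(f)^*(r,i+1)\big)$. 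Since $ft(f):Y\sr ft^{i+1}(X)$ with $l(Y)+l(ft^{i+1}(X))<n$, the inductive hypothesis (now with index $i+1$) gives $ft(f)^*(r,i+1)\in\wt{B}$, whence condition (5) applied to $s_f\in\wt{B}$ yields $f^*(r,i)\in\wt{B}$.

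The inductive scheme is short; the real work --- and the main obstacle --- is the bookkeeping of the second paragraph together with the verification that at each step the pair fed to $\wt{T}$ or $\wt{S}$ is admissible (that the ``$l(\cdot)\ge\cdot$'' and ``there is a unique $j\ge1$ with $\dots$'' side conditions of those operations hold for the intermediate objects and sections), that the length condition needed to re-invoke the hypothesis with index $i+1$ is met, and --- most delicately --- that the reassociation identity for sections really identifies $q(ft(f),ft^i(X))^*(r,i)$ with $ft(f)^*(r,i+1)$ as elements of $\wt{Ob}$ and not merely on underlying objects.
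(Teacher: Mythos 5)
Your proof is correct and takes essentially the same route as the paper's: both hinge on the decomposition $f=s_f\circ q_f$, the identification $q_f^*(r,i)=(ft(f))^*(r,i+1)$, and an appeal to condition (5) via $\wt{S}$, with the $ft^i(X)=pt$ case handled by (repeated) application of condition (4) via $\wt{T}$. Your combined measure $l(Y)+l(ft^i(X))$ merely packages the paper's increasing induction on $l(ft^i(X))$ together with its ``repeated application of condition (4)'' into a single induction, and your explicit bookkeeping identities are details the paper leaves implicit.
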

\begin{proof}
Proceed by increasing induction on the length of $ft^i(X)$. Suppose first that $ft^i(X)=pt$. Then $f=p_{Y,n}$ for some $n$ and the statement of the lemma follows from repeated application of condition (4). Suppose that the lemma is proved for all morphisms to objects of length $j-1$ and let the length of $ft^i(X)$ be $j$. Consider the canonical decomposition $f=s_f q_f$. From it we have $f^*(r,i)=s_f^*(q_f^*(r,i),i)$.  Since $q_f$ is the canonical pull-back of $ft(f)$ we further have $q_f^*(r,i)=(ft(f))^*(r,i+1)$ and therefore
$$f^*(r,i)=s_f^*(ft(f)^*(r,i+1),i)$$
By induction $(ft(f))^*(r,i+1)\in \wt{B}$ and therefore $f^*(r,i)\in \wt{B}$ by condition  (5). 
\end{proof}
\begin{lemma}
\llabel{2009.10.16.l4}
Under the assumptions of the proposition, let $g:Z\sr Y$ and $f:Y\sr X$ be in $Mor(CC')$. Then $gf\in Mor(CC')$. 
\end{lemma}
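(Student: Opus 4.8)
The plan is to argue by induction on $m = l(X)$, following the inductive definition of $Mor(CC')$ used in the proof of the proposition. In the base case $m = 0$ we have $X = pt$, so $gf : Z \sr pt$ and the only thing to check is $Z \in B$; this holds because $g \in Mor(CC')$ has domain $Z$, and the domain of any morphism of $Mor(CC')$ lies in $B$ (immediately if the target is $pt$, and otherwise because $s_g \in \wt{B}$ forces $\partial(s_g) \in B$ by condition (3), whence $Z = ft(\partial(s_g)) \in B$ by condition (2)).

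Now suppose $m > 0$. By the second constructor of $Mor(CC')$ it suffices to establish three things: $X \in B$; $ft(gf) \in Mor(CC')$; and $s_{gf} \in \wt{B}$. The first is clear since $f \in Mor(CC')$ and $l(X) > 0$. For the second, note that $ft(gf) = (gf) \circ p_X = g \circ (f \circ p_X) = g \circ ft(f)$, that $g$ and $ft(f)$ both lie in $Mor(CC')$, and that their common target $ft(X)$ has length $m - 1$; hence the inductive hypothesis gives $ft(gf) = g \circ ft(f) \in Mor(CC')$.

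The substantive point is $s_{gf} \in \wt{B}$. Put $A = (ft(f))^*(X)$, so that $ft(A) = Y$, $l(A) = l(X) > 0$, and $s_f : Y \sr A$ is a section of $p_A$ lying in $\wt{B}$ (because $f \in Mor(CC')$). By C0-system axiom 7 one has $(ft(gf))^*(X) = (g \circ ft(f))^*(X) = g^*(A)$ and $q(ft(gf), X) = q(g, A) \circ q(ft(f), X)$. I claim $s_{gf} = g^*(s_f, 1)$, the pull-back of the section $s_f : ft(A) \sr A$ along $g : Z \sr ft(A)$ in the sense introduced before the proposition. Since $CC$ is a C-system, the canonical squares are pull-backs by Proposition \ref{2014.07.06.prop1}, so $s_h$ is the unique morphism satisfying its two defining identities; it therefore suffices to verify these for $g^*(s_f, 1)$ with $h = gf$. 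The identity $g^*(s_f, 1) \circ p_{g^*(A)} = Id_Z$ is part of the definition of $g^*(s_f, 1)$, and $g^*(s_f, 1) \circ q(ft(gf), X) = g^*(s_f, 1) \circ q(g, A) \circ q(ft(f), X) = (g \circ s_f) \circ q(ft(f), X) = g \circ f = gf$, using the other defining identity of $g^*(s_f, 1)$ together with $f = s_f \circ q(ft(f), X)$. Hence $s_{gf} = g^*(s_f, 1)$, and this is exactly the morphism produced by Lemma \ref{2009.10.16.l3} applied with $r = s_f \in \wt{B}$, $i = 1$, and the morphism $g \in Mor(CC')$; therefore $s_{gf} \in \wt{B}$, completing the induction.

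I expect the main obstacle to be the final step: correctly identifying $s_{gf} = g^*(s_f, 1)$, which requires rewriting $(ft(gf))^*(X)$ and $q(ft(gf), X)$ via the C0-system axioms and then appealing to the uniqueness of the $s$-operation, and afterwards matching the resulting expression with the input format of Lemma \ref{2009.10.16.l3}. A minor but essential point is that the induction runs on the length of the target $X$ only — the objects $Z$, $Y$, $X$ may have unrelated lengths — which is precisely what legitimises the reduction through $ft(gf) = g \circ ft(f)$, whose target $ft(X)$ is shorter.
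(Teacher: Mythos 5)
Your proof is correct and follows essentially the same route as the paper: induction on $l(X)$, reduction of $ft(gf)=g\circ ft(f)$ to the inductive hypothesis, the identification $s_{gf}=g^*(s_f,1)$ via the composite of canonical pull-back squares, and an appeal to Lemma \ref{2009.10.16.l3} with $r=s_f$ and $i=1$. The only difference is that you spell out the uniqueness argument behind $s_{gf}=g^*(s_f,1)$ and the base case in more detail than the paper does.
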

\begin{proof}
If $X=pt$ the the statement is obvious. Assume that it is proved for all $f$ whose codomain is of length $<j$ and let $X$ be of length $j$. We have $ft(gf)=g\,ft(f)$ and therefore $ft(gf)\in Mor(CC')$ by the inductive assumption. It remains to show that  $s_{gf}\in \wt{B}$. We have the following diagram whose squares are canonical pull-back squares
$$
\begin{CD}
X_{gf} @>>> X_f @>>> X\\
@VVV @VVV @VVp_XV\\
Z @>g>> Y @>ft(f)>> ft(X)
\end{CD}
$$
which shows that $s_{gf}=g^*(s_f,1)$. Therefore, $s_{gf}\in Mor(CC')$ by Lemma \ref{2009.10.16.l3}.
\end{proof}
\begin{lemma}
\llabel{2009.10.16.l5}
Under the assumptions of the proposition, let $X\in B$ and let $f:Y\sr ft(X)$ be in $Mor(CC')$, then $f^*(X)\in B$ and $q(f,X)\in Mor(CC')$.
\end{lemma}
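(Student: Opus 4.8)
The plan is to establish the two assertions of the lemma separately — first $f^*(X)\in B$, then $q(f,X)\in Mor(CC')$ — and to do so using the already proven Lemmas \ref{2009.10.16.l1}, \ref{2009.10.16.l3} and \ref{2009.10.16.l4} rather than setting up a fresh induction. Note that $l(X)>0$ is implicit in the statement, since otherwise $ft(X)$, $f^*(X)$ and $q(f,X)$ are undefined.

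For $f^*(X)\in B$, the idea I would use is that a pull-back of $X$ can be read off as the \emph{target} of the corresponding pull-back of the diagonal section $\delta(X)$. Since $X\in B$ and $l(X)>0$, condition (6) of Proposition \ref{2009.10.15.prop2} gives $\delta(X)=s_{Id_X}\in\wt B$; this is a section of $p_{p_X^*(X)}$, that is, an element of $\wt{Ob}$ with $ft(p_X^*(X))=X$. Applying Lemma \ref{2009.10.16.l3} with the object $p_X^*(X)$, the section $\delta(X)\in\wt B$, the index $i=2$, and the morphism $f:Y\sr ft(X)=ft^2(p_X^*(X))$ in $Mor(CC')$, I get $f^*(\delta(X),2)\in\wt B$. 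By construction $f^*(\delta(X),2)$ is a section whose target is $f^*(p_X^*(X),2)$, so condition (3) of Proposition \ref{2009.10.15.prop2} gives $\partial(f^*(\delta(X),2))=f^*(p_X^*(X),2)\in B$, and since $ft(f^*(p_X^*(X),2))=f^*(X,1)=f^*(X)$, condition (2) of Proposition \ref{2009.10.15.prop2} gives $f^*(X)\in B$.

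For $q(f,X)\in Mor(CC')$ I would unwind the inductive description of $Mor(CC')$: since $l(X)>0$ it suffices to check $X\in B$ (given), $ft(q(f,X))\in Mor(CC')$, and $s_{q(f,X)}\in\wt B$. The commutativity of the canonical pull-back square (\ref{2009.10.14.eq1}) gives $ft(q(f,X))=q(f,X)\circ p_X=p_{f^*X}\circ f$; as $f^*X\in B$ by the first part, $p_{f^*X}\in Mor(CC')$ by Lemma \ref{2009.10.16.l1}, and hence $p_{f^*X}\circ f\in Mor(CC')$ by Lemma \ref{2009.10.16.l4}. For the section, I would apply condition (4) of Definition \ref{2014.07.06.def1} to the object $f^*X=f^*(X)$, which is of the form $g^*(U)$ with $g=f:ft(f^*X)\sr ft(X)$ and $U=X$: this yields $s_h=s_{h\circ q(f,X)}$ for every morphism $h$ with target $f^*X$, and taking $h=Id_{f^*X}$ gives $s_{q(f,X)}=s_{Id_{f^*X}}=\delta(f^*X)$, which lies in $\wt B$ by condition (6) of Proposition \ref{2009.10.15.prop2}.

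The only genuinely delicate point is the second paragraph: one has to notice that $\partial(f^*(\delta(X),2))=f^*(p_X^*(X),2)$ and that its father is $f^*(X)$, so that Lemma \ref{2009.10.16.l3} combined with the $\partial$- and $ft$-closure conditions already does the job. This is what lets me sidestep the circularity in the naive alternative — an induction on $l(ft(X))$ via $f=s_f\circ q(ft(f),ft(X))$, which only reduces $f^*(X)$ to a pull-back along $q(ft(f),ft(X))$ over a base of the same length $l(ft(X))$ and therefore does not obviously terminate.
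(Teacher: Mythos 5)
Your argument is correct, and its core coincides with the paper's: both proofs obtain $f^*(X)\in B$ by pulling back the diagonal $\delta(X)\in\wt{B}$ along $f$ to depth $2$ via Lemma \ref{2009.10.16.l3} and then reading off $f^*(X)=ft(\partial(f^*(\delta(X),2)))$ using closure conditions (3) and (2), and both then assemble $q(f,X)$ from $ft(q(f,X))=p_{f^*(X)}f$ (Lemmas \ref{2009.10.16.l1} and \ref{2009.10.16.l4}) plus membership of its section in $\wt{B}$. Where you diverge is the section step: the paper identifies $s_{q(f,X)}$ with the element $f^*(\delta(X),2)$ already produced in the first half (via the big diagram of canonical squares), so one computation serves both conclusions; you instead invoke axiom (4) of Definition \ref{2014.07.06.def1} with $h=Id_{f^*X}$ to get $s_{q(f,X)}=s_{Id_{f^*X}}=\delta(f^*X)$ and then apply closure condition (6) to $f^*X\in B$. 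Your route is a legitimate and slightly cleaner way to handle the section, since it needs no identification of $f^*(\delta(X),2)$ with $s_{q(f,X)}$; the paper's route is more economical overall because that identification makes the two halves of the lemma fall out of a single application of Lemma \ref{2009.10.16.l3}. (In fact the two sections you treat separately are equal: $f^*(\delta(X),2)=s_{q(f,X)}=\delta(f^*X)$.) Your closing remark about why the naive induction on $f=s_f\circ q(ft(f),ft(X))$ does not terminate is also well taken.
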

\begin{proof}
Consider the diagram
$$
\begin{CD}
f^*(X) @>q(f,X)>> X \\
@Vs_{q(f,X)}VV @VVs_{Id_X}V\\
q(f,X)^*(p_X^*(X)) @>>> p_X^*(X) @>>> X\\
@VVV @VVV @VVV\\
f^*(X) @>q(f,X)>> X @>>> ft(X)\\
@Vp_{f^*(X)}VV  @VVp_XV @.\\
Y @>f>> ft(X)
\end{CD}
$$ 
where the squares are canonical.  By condition (6) we have $s_{Id_X}=\delta(X)\in \wt{B}$. Therefore, by Lemma \ref{2009.10.16.l3}, we have 
$$s_{q(f,X)}=f^*(\delta(X),2)\in \wt{B}.$$
By condition (3),  $\partial(s_{q(f,X)})\in B$ and therefore 
$$f^*(X)=ft(\partial(s_{q(f,X)}))\in B.$$
by condition (2). Together with the previous lemmas this shows that 
$$ft(q(f,X))=p_{f^*(X)}f\in Mor(CC')$$
and therefore $q(f,X)\in Mor(CC')$. 
\end{proof}
\begin{lemma}
\llabel{2009.10.16.l6}
Under the assumptions of Lemma \ref{2009.10.16.l5}, the square
$$
\begin{CD}
f^*(X) @>q(f,X)>> X\\
@Vp_{f^*(X)}VV @VVp_XV\\
Y @>f>> ft(X)
\end{CD}
$$
is a pull-back square in $CC'$.
\end{lemma}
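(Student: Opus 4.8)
The plan is to derive the pull-back property in $CC'$ from the pull-back property in the ambient C-system $CC$ (which holds by Proposition \ref{2014.07.06.prop1}) together with the closure properties of $Mor(CC')$ established above. First I would check that the diagram in question is a commutative square \emph{of} $CC'$: its edge $f$ is in $Mor(CC')$ by hypothesis, while $q(f,X)\in Mor(CC')$ and $f^*(X)\in B$ by Lemma \ref{2009.10.16.l5}, and then $p_X$ and $p_{f^*(X)}$ are in $Mor(CC')$ by Lemma \ref{2009.10.16.l1} (applied to $X\in B$ and to $f^*(X)\in B$); commutativity is inherited from $CC$. Recall also that, being a canonical square of the C-system $CC$, this square is a pull-back square in $CC$.

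Next I would unwind the universal property relative to $CC'$. Take $Z\in B$ and morphisms $g_1:Z\sr Y$, $g_2:Z\sr X$ of $CC'$ with $g_1\,f=g_2\,p_X$ (equivalently $g_1\,f=ft(g_2)$). A morphism $g:Z\sr f^*(X)$ with $ft(g)=g_1$ and $g\,q(f,X)=g_2$ is already unique in $CC$, since the square is a pull-back there, and $Mor(CC')\subset Mor(CC)$; hence uniqueness in $CC'$ is automatic. So the only content is to show that the unique such $g$ in $CC$ lies in $Mor(CC')$.

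For this I would reuse the explicit factorization from the proof of Proposition \ref{2014.07.06.prop1}, namely $g=s_{g_2}\circ q(g_1,f^*(X))$. Here $q(g_1,f^*(X))$ is defined because $g_1:Z\sr Y=ft(f^*(X))$, and its source is $g_1^*(f^*(X))=(g_1 f)^*(X)=(ft(g_2))^*(X)$, which is precisely the target of $s_{g_2}$, so the composite makes sense. Now $q(g_1,f^*(X))\in Mor(CC')$ by Lemma \ref{2009.10.16.l5}, applied with $f^*(X)\in B$ in place of $X$ and with $g_1\in Mor(CC')$ in place of $f$. Moreover $s_{g_2}\in\wt{B}$ by the second constructor in the definition of $Mor(CC')$ (since $g_2\in Mor(CC')$), and every $s\in\wt{B}$ lies in $Mor(CC')$: indeed $\partial(s)\in B$ by condition (3), $ft(s)=s\,p_{\partial(s)}$ is an identity morphism of an object of $B$ and so lies in $Mor(CC')$ by Lemma \ref{2009.10.16.l1}, and $s_s=s\in\wt{B}$ by the C0-system axioms. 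Therefore $s_{g_2}\in Mor(CC')$, and finally $g=s_{g_2}\circ q(g_1,f^*(X))\in Mor(CC')$ by closure under composition, Lemma \ref{2009.10.16.l4}. This produces the required factorization inside $CC'$ and finishes the proof.

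The lemma is thus essentially a formal consequence of Lemma \ref{2009.10.16.l5} and Proposition \ref{2014.07.06.prop1}, and I do not expect a genuine obstacle. The only points that demand a little care are the identity $s_s=s$ for a section $s$ (needed to see $\wt{B}\subseteq Mor(CC')$) and the identification of the ambient pull-back factorization with the composite $s_{g_2}\circ q(g_1,f^*(X))$ — but the latter is exactly the computation already carried out in the proof of Proposition \ref{2014.07.06.prop1}, using the C0-system identities $g_1^*(f^*(X))=(g_1 f)^*(X)$ and $ft(g_2)=g_2\,p_X$.
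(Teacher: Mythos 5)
Your proof is correct, and every step checks out: the square lies in $CC'$ by Lemmas \ref{2009.10.16.l1} and \ref{2009.10.16.l5}, uniqueness of the filler is inherited from $CC$, and the filler $g=s_{g_2}\circ q(g_1,f^*(X))$ lands in $Mor(CC')$ because $q(g_1,f^*(X))\in Mor(CC')$ (Lemma \ref{2009.10.16.l5} applied to $f^*(X)\in B$), $s_{g_2}\in\wt{B}\subseteq Mor(CC')$ (using $s_s=s$ and $ft(s)=Id$), and $Mor(CC')$ is closed under composition (Lemma \ref{2009.10.16.l4}). The paper takes a shorter route: it phrases the pull-back property in $CC'$ as the statement that any $g:Z\sr f^*(X)$ in $CC$ with $g\,p_{f^*(X)}$ and $g\,q(f,X)$ in $Mor(CC')$ is itself in $Mor(CC')$, and verifies this directly from the membership criterion defining $Mor(CC')$: one has $ft(g)=g\,p_{f^*(X)}\in Mor(CC')$, and $s_g=s_{g\,q(f,X)}\in\wt{B}$ by condition (4) of Definition \ref{2014.07.06.def1} (the compatibility of $s_{(-)}$ with canonical squares), since $g\,q(f,X)\in Mor(CC')$. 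So the paper never writes down the filler explicitly, whereas you reconstruct it via the formula from the proof of Proposition \ref{2014.07.06.prop1} and then invoke the composition and base-change closure lemmas. Your version costs a bit more machinery (Lemma \ref{2009.10.16.l4} and a second application of Lemma \ref{2009.10.16.l5}) but makes the universal property fully explicit; the paper's version isolates exactly the one C-system identity $s_g=s_{g\,q(f,X)}$ that makes the membership test trivial. Both are sound.
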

\begin{proof}
We need to show that for a morphism $g:Z\sr f^*(X)$ such that $g p_{f^*(X)}$ and $g q(f,X)$ are in $Mor(CC')$ one has $g\in Mor(CC')$. We have $ft(g)=g p_{f^*(X)}$, therefore by definition of $Mor(CC')$ it remains to check that $s_g\in \wt{B}$. The diagram of canonical pull-back squares 
$$
\begin{CD}
(f^*(X))_g @>>> f^*(X) @>q(f,X)>> X\\
@VVV @VVV @VVV\\
Z @>ft(g)>> Y @>f>> ft(X)
\end{CD}
$$
shows that $s_g=s_{gq(f,X)}$ and therefore $s_g\in Mor(CC')$.
\end{proof}
To finish the proof of the proposition it remains to show that $Ob(CC')=B$ and $\wt{Ob}(CC')=\wt{B}$. The first assertion is tautological. The second one follows immediately from the fact that for $(s:ft(X)\sr X)\in \wt{Ob}(CC)$ one has $ft(s)=Id_{ft(X)}$ and $s_s=s$. 
\end{proof}

\subsection{Regular congruence relations on C-systems}
The following definition of a regular congruence relation is an abstraction to the contextual categories of the structure that arises from the ``definitional'' equalities between types and terms of a type in dependent type theory. This connection is studied further in \cite{Cofamodule}. 
\begin{definition}
\llabel{2014.07.04.def1}
Let $CC$ be a C-system. A regular congruence relation on $CC$ is a pair of equivalence relations $\sim_{Ob}, \sim_{Mor}$ on $Ob(CC)$ and $Mor(CC)$ respectively such that:
\begin{enumerate}
\item $\sim_{Ob}$ and $\sim_{Mor}$ are compatible with $\partial_0,\partial_1,id,ft, (X\mapsto p_X), ((f,g)\mapsto fg), ((X,f)\mapsto f^*(X))$, $(X,f)\mapsto q(f,X)$ and $f\mapsto s_f$,
\item $X\sim_{Ob} Y$ implies $l(X)=l(Y)$,
\item for any $X,F\in Ob(CC)$, $l(X)>0$ such that $ft(X)\sim_{Ob} F$ there exists $X_F$ such that $X\sim_{Ob} X_F$ and $ft(X_F)=F$,
\item for any $f:X\sr Y$ and $X',Y'$ such that $X'\sim_{Ob} X$ and $Y'\sim_{Ob} Y$ there exists $f':X'\sr Y'$ such that $f'\sim_{Mor} f$,
\end{enumerate}
\end{definition}
\begin{lemma}
\llabel{2014.07.08.l1}
If $R=(\sim_{Ob},\sim_{Mor})$ is a regular congruence relation on $CC$ then there exists a unique C-system $CC/R$ on the pair of sets $(Ob(CC)/\sim_{Ob},Mor(CC)/\sim_{Mor})$ such that the obvious function from $CC$ is a homomorphism of C-systems.
\end{lemma}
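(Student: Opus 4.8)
The plan is to build the quotient C-system directly from the underlying pre-category quotient and the inherited combinatorial data, then check uniqueness. First I would observe that since $\sim_{Ob}$ and $\sim_{Mor}$ are compatible with $\partial_0,\partial_1,id,\circ$, the pair $(Ob(CC)/\sim_{Ob},Mor(CC)/\sim_{Mor})$ carries a unique pre-category structure making the projection $\pi:CC\sr CC/R$ a functor; this is the standard quotient-of-a-category construction and uses only item (1). Next, because $\sim_{Ob}$ is compatible with $ft$ and satisfies item (2), the length function $l$ descends to $l':Ob(CC)/\sim_{Ob}\sr\nn$, and $ft$ descends to $ft'$; the class $[pt]$ serves as $pt'$, and compatibility with $X\mapsto p_X$ gives the projections $p'_{[X]}$. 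One checks $l'^{-1}(0)=\{[pt]\}$ using item (2), and the remaining identities among $l',ft',pt'$ are immediate from those in $CC$. Finiteness here: every defining identity of a C0-system is an equation between morphisms or objects, so it transports along $\pi$ once the operations are well-defined on classes.

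The substantive part is defining the operation $([X],[f])\mapsto [f]^*[X]$ and $[f]\mapsto q'([f],[X])$ for $l'([X])>0$ and $[f]:[Y]\sr ft'([X])$. The naive guess — pick representatives $X$ and $f$ with $\partial_1(f)=ft(X)$ and set $[f]^*[X]:=[f^*X]$ — is not obviously legitimate, because an arbitrary representative $f:Y\sr Z$ of $[f]$ need only satisfy $[Z]=ft'([X])=[ft(X)]$, i.e. $Z\sim_{Ob} ft(X)$, not $Z=ft(X)$. This is exactly where hypotheses (3) and (4) of Definition \ref{2014.07.04.def1} are used: given a representative $f:Y\sr Z$ with $Z\sim_{Ob} ft(X)$, item (3) produces $X_Z\sim_{Ob} X$ with $ft(X_Z)=Z$, and then $f^*(X_Z)$ is legitimately defined. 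I would then show independence of all choices: if $f':Y'\sr Z'$ is another representative with $f'\sim_{Mor} f$ (so $Z'\sim_{Ob} Z$) and $X_{Z'}\sim_{Ob} X$ with $ft(X_{Z'})=Z'$, then $f^*(X_Z)\sim_{Ob} (f')^*(X_{Z'})$ and the corresponding $q$-morphisms are $\sim_{Mor}$-related. The cleanest route is via Proposition \ref{2014.07.06.prop1}: the operation $s_{(-)}$ is $\sim_{Mor}$-compatible by item (1), and $f^*(X)$, $q(f,X)$ can be recovered from $s$-operations and compositions as in the proof of that proposition (e.g. $q(f,X) = s_{q(f,X)}\circ q(ft(q(f,X)),\dots)$-type factorizations), so compatibility of $s_{(-)}$ plus compatibility of $\circ$ forces compatibility of $f^*(-)$ and $q(-,-)$ on the nose, and independence of the representative of the codomain reduces to an application of (3) together with the uniqueness clause of pull-backs in $CC$. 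Item (4) is needed to know that the ambient morphism used to transport $X$ can itself be chosen in the right $\sim_{Mor}$-class, so that the two candidate pullbacks are compared by a genuine morphism of $CC$.

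Once the five pieces of data $(l',pt',ft',p'_{(-)},(f^*(-),q'(-,-)))$ are defined and shown to be well-defined, verifying conditions (1)--(7) of Definition \ref{2014.07.06.def3} for $CC/R$ is routine: each is an equation or a statement ``$l'(\cdot)>0$'', and each follows by choosing representatives realizing the hypotheses (using (3),(4) where a representative with a prescribed source/target is needed) and applying the corresponding condition in $CC$, then projecting. To upgrade $CC/R$ from a C0-system to a C-system I would use Proposition \ref{2014.07.06.prop1} once more, defining $s'_{[f]}:=[s_f]$ — well-defined by item (1) — and checking the four conditions of Definition \ref{2014.07.06.def1} by transport; alternatively one shows the canonical squares in $CC/R$ are pull-backs, which follows because $\pi$ is surjective on objects and morphisms (built into the construction) and the squares in $CC$ are pull-backs, again invoking (3) and (4) to lift a competing cone along $\pi$. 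Finally, uniqueness: if $CC''$ is another C-system on the same pair of sets with $\pi$ a homomorphism, then $\pi$ being surjective on $Ob$ and $Mor$ forces every operation of $CC''$ to agree with the one we constructed, since the value on any class is determined by the value on a representative, so $CC''=CC/R$. The main obstacle is the well-definedness of $f^*(-)$ and $q(-,-)$ when the chosen representative of $[f]$ has a codomain merely equivalent to, rather than equal to, $ft(X)$; conditions (3) and (4) are precisely engineered to overcome it, and the pull-back uniqueness from Proposition \ref{2014.07.06.prop1} is what makes the comparison canonical.
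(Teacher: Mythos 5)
Your proposal is correct and follows essentially the same route as the paper: the paper's (much terser) proof likewise identifies the partial-definedness of $(X,f)\mapsto f^*(X)$ and $(X,f)\mapsto q(f,X)$ as the only real obstacle, and uses conditions (3) and (4) of Definition \ref{2014.07.04.def1} exactly as you do, to show that the quotients of the domains of definition surject onto the sets where the quotient operations must be defined, after which compatibility from condition (1) gives descent and all equations transport along the surjective projection. Your extra detour through $s_{(-)}$ and pull-back uniqueness to re-derive compatibility of $f^*$ and $q$ is unnecessary (that compatibility is already assumed in item (1)) but harmless.
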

\begin{proof}
 Since operations such as composition, $(X,f)\mapsto f^*(X)$ and $(X,f)\mapsto q(f,X)$ are not everywhere defined the condition that $\sim_{Ob}$ and $\sim_{Mor}$ are compatible with operations does not imply that the operations can be descended to the quotient sets. However when we add conditions (3) and (4) of Definition \ref{2014.07.04.def1} we see that the functions from the quotients of the domains of definitions of operations to the domains where quotient operations should be defined are surjective and therefore the quotient operations are defined and satisfy all the relations which the original operations satisfied. 
\end{proof}
\begin{lemma}
\llabel{2014.07.06.l2}
Let $R=(\sim_{Ob},\sim_{Mor})$ be a regular congruence relation on $CC$ and let $\sim_{\wt{Ob}}$ be the restriction of $\sim_{Mor}$ to $\wt{Ob}$. Then one has:
$$\wt{Ob}(CC/R)=\wt{Ob}(CC)/\sim_{\wt{Ob}}$$
\end{lemma}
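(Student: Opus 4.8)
The plan is to unwind both sides of the claimed equality in terms of the quotient construction $CC/R$ produced by Lemma \ref{2014.07.08.l1}. Recall that $\wt{Ob}(CC/R)$ is by definition the set of sections $s:ft(X')\sr X'$ of $p_{X'}$ in $CC/R$, for $X'$ with $l(X')>0$; here $X'$ ranges over $Ob(CC)/\sim_{Ob}$ and the morphisms are classes in $Mor(CC)/\sim_{Mor}$. On the other hand $\wt{Ob}(CC)/\sim_{\wt{Ob}}$ is the image of $\wt{Ob}(CC)$ under the projection $Mor(CC)\sr Mor(CC)/\sim_{Mor}$. Since the projection $\pi:CC\sr CC/R$ is a homomorphism of C-systems, it carries $\wt{Ob}(CC)$ into $\wt{Ob}(CC/R)$ (a section of $p_X$ maps to a section of $p_{\pi(X)}$, and $l$ is preserved), so there is an obvious inclusion $\wt{Ob}(CC)/\sim_{\wt{Ob}}\subseteq \wt{Ob}(CC/R)$, and what must be shown is surjectivity: every section in $CC/R$ is the class of a section in $CC$.

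First I would fix a class $[X]\in Ob(CC/R)$ with $l([X])=l(X)>0$, and a morphism-class $[s]\in Mor(CC/R)$ with $\partial_0([s])=ft([X])=[ft(X)]$, $\partial_1([s])=[X]$, and $[s]\circ p_{[X]}=Id_{[ft(X)]}$ in $CC/R$. Choose any representative morphism $g:A\sr B$ of $[s]$ in $CC$; then $A\sim_{Ob} ft(X)$ and $B\sim_{Ob} X$. The goal is to replace $g$ by a genuine section of $p_X$ in $CC$ lying in the same $\sim_{Mor}$-class. The key tool is condition (4) of Definition \ref{2014.07.04.def1} (existence of a representative morphism with prescribed equivalent source and target) together with condition (3): applying condition (4) to $g:A\sr B$ with the targets $ft(X)\sim_{Ob} A$ and $X\sim_{Ob} B$ gives a morphism $f:ft(X)\sr X$ in $CC$ with $f\sim_{Mor} g$, hence $[f]=[s]$. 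Now $f$ is a morphism with the right source and target in $CC$, but it need not satisfy $f\circ p_X=Id_{ft(X)}$ on the nose — only after passing to the quotient.

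So the remaining and main point is to repair $f$ into an actual section while staying in its $\sim_{Mor}$-class. Here I would use the section operation: since $l(X)>0$ and $f:ft(X)\sr X$, consider $ft(f)=f\circ p_X:ft(X)\sr ft(X)$; in the quotient $ft([f])=[f]\circ p_{[X]}=Id_{[ft(X)]}$, so $ft(f)\sim_{Mor} Id_{ft(X)}$. Because $ft(f)^*(X)$ and $Id_{ft(X)}^*(X)=X$ then have $\sim_{Ob}$-related data and $\sim_{Mor}$ is compatible with the operation $f\mapsto s_f$ and with $q(-,X)$, the element $s_f:ft(X)\sr (ft(f))^*(X)$ maps in the quotient to the canonical section corresponding to $[s]$; composing with $q(ft(f),X)$ recovers $f$ up to $\sim_{Mor}$. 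Concretely, the candidate section in $CC$ is obtained by first using condition (3) to find an object $X'\sim_{Ob}(ft(f))^*(X)$ with $ft(X')=ft(X)$ — but in fact $(ft(f))^*(X)$ already has father $ft(X)$ — and then observing that $s_f\in\wt{Ob}(CC)$ by the defining conditions of $\wt{Ob}$, while $(ft(f))^*(X)\sim_{Ob} X$; applying condition (4) once more to $s_f$ with target $X$ in place of $(ft(f))^*(X)$ produces $\sigma:ft(X)\sr X$ with $\sigma\sim_{Mor} s_f$. One checks $\sigma\circ p_X\sim_{Mor} s_f\circ p_{(ft(f))^*(X)}=Id_{ft(X)}$, but this only gives $\sigma\circ p_X\sim_{Mor}Id$, not equality; to get honest equality I would instead directly verify that $s_f$ itself, viewed via the identification of $(ft(f))^*(X)$ with the appropriate object, is already the section we want, by exploiting that $ft(f)\sim_{Mor}Id_{ft(X)}$ forces, through compatibility of the operations with $\sim$, the object $(ft(f))^*(X)$ to carry the structure making $s_f$ a section of $p_X$ after the canonical identification $q(ft(f),X)$. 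The cleanest route, which I expect to be the actual argument, is: given $[s]$, lift it through condition (4) to $f:ft(X)\sr X$; then $[s_f]=[s]$ because $[s]=[f]=[s_f\circ q(ft(f),X)]$ and in the quotient $q(ft([f]),[X])=q(Id,[X])=Id_{[X]}$; and $s_f\in\wt{Ob}(CC)$ by Definition \ref{2014.07.06.def1}(1)–(2) once we know $(ft(f))^*(X)$ is an object of $CC$ with father $ft(X)$ — which it is — \emph{provided} $(ft(f))^*(X)=X$; that last equality does not hold in $CC$, so the genuine obstacle is precisely that $s_f$ lands in $(ft(f))^*(X)$ rather than $X$. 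I would resolve it by applying condition (3) to $(ft(f))^*(X)\sim_{Ob}X$ together with condition (4) to find $\sigma\sim_{Mor} s_f$ with $\sigma:ft(X)\sr X$, and then show $\sigma$ can be taken to be a section: since $\sigma\circ p_X\sim_{Mor}Id_{ft(X)}$ and both are endomorphisms of $ft(X)$ with the \emph{same} source and target, one final application of the section operation — replacing $\sigma$ by $s_\sigma$ composed back appropriately, or simply invoking that on $ft(X)$, any morphism $\sim_{Mor}$-equivalent to $Id$ can be rigidified — yields the desired honest section of $p_X$ in $CC$ whose class is $[s]$. I would then conclude that $\wt{Ob}(CC/R)\subseteq\wt{Ob}(CC)/\sim_{\wt{Ob}}$, giving the equality. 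The main obstacle is this rigidification step: turning a morphism that is only a section up to congruence into an actual section in $CC$ within the same class, which is exactly where conditions (3) and (4) of the regular congruence relation must be used in tandem.
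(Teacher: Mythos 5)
Your argument contains the paper's entire proof and then talks itself out of it. The paper's proof is exactly your step ``lift $[s]$ via condition (4) to $f:ft(X)\sr X$, note $ft(f)\sim_{Mor} Id_{ft(X)}$, hence $q(ft(f),X)\sim_{Mor} q(Id_{ft(X)},X)=Id_X$ and so $s_f\sim_{Mor} s_f\circ q(ft(f),X)=f$'' --- and it stops there, because at that point you are done. The ``genuine obstacle'' you identify (that $s_f$ lands in $(ft(f))^*(X)$ rather than in $X$) is not an obstacle: $\wt{Ob}(CC)$ is by definition the set of sections of $p_Y$ for \emph{all} $Y$ with $l(Y)>0$, not sections of the particular $p_X$, and $s_f:ft(X)\sr (ft(f))^*(X)$ satisfies $s_f\circ p_{(ft(f))^*(X)}=Id_{ft(X)}$ on the nose, so $s_f\in\wt{Ob}(CC)$. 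Since $\wt{Ob}(CC)/\sim_{\wt{Ob}}$ is just the image of $\wt{Ob}(CC)$ in $Mor(CC)/\sim_{Mor}$, surjectivity onto $\wt{Ob}(CC/R)$ only requires that the class $[f]=[s]$ contain \emph{some} honest element of $\wt{Ob}(CC)$, and $s_f$ is one (its target $(ft(f))^*(X)$ is $\sim_{Ob}$-equivalent to $X$, which is all that is needed for $[\partial(s_f)]=[X]$).

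The detour you append after that point is where the proposal actually breaks. Your final ``rigidification'' step --- replacing $\sigma\sim_{Mor}s_f$, $\sigma:ft(X)\sr X$, by an honest section of $p_X$ in the same class, ``invoking that any morphism $\sim_{Mor}$-equivalent to $Id$ can be rigidified'' --- is not justified by anything in Definition \ref{2014.07.04.def1}: conditions (3) and (4) let you prescribe the source and target of a representative, but give no way to force an additional equation such as $\sigma\circ p_X=Id_{ft(X)}$ to hold exactly rather than up to $\sim_{Mor}$. So as written the proof is incomplete at its declared ``main obstacle''; the repair is simply to delete everything after the observation $[s_f]=[s]$ and note $s_f\in\wt{Ob}(CC)$.
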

\begin{proof}
It is sufficient to verify that for $X\in Ob(CC)$ and $t: ft(X)\sr X$ such that $l(X)>0$ and $ft(t)\sim_{Mor} Id_{ft(X)}$ there exists $(s: ft(X)\sr X)\in \wt{Ob}(CC)$ such that $t\sim_{Mor} s$.

We have $t=s_t\circ q(ft(t),X)$. Since $ft(t)\sim_{Mor} Id_{ft(X)}$ we have $t\sim_{Mor} s_t$. 
\end{proof}
\begin{proposition}
\llabel{2014.07.08.prop1}
The function which maps a regular congruence relation $(\sim_{Ob},\sim_{Mor})$ to the pair of equivalence relations $(\sim_{Ob},\sim_{\wt{Ob}})$ on $Ob(CC)$ and $\wt{Ob}(CC)$, where $\sim_{\wt{Ob}}$ is obtained by the restriction of $\sim_{Mor}$, is a bijection to the set of pairs of equivalence relations $(\sim,\simeq)$ satisfying the following conditions:
\begin{enumerate}
\item compatibilities with operations $ft$, $\partial$, $T$, $\wt{T}$, $S$, $\wt{S}$ and $\delta$,
\item $X\sim Y$ implies $l(X)=l(Y)$,
\item for any $X,F\in Ob(CC)$, $l(X)>0$ such that $ft(X)\sim F$ there exists $X_F$ such that $X\sim X_F$ and $ft(X_F)=F$,
\item for any $(s:ft(X)\sr X)\in \wt{Ob}$ and $X'\sim X$ there exists $(s': ft(X')\sr X')\in \wt{Ob}$ such that $s'\simeq s$. 
\end{enumerate}
\end{proposition}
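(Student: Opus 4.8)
The plan is to establish the bijection by constructing an explicit inverse and checking that the two constructions are mutually inverse. Given a pair of equivalence relations $(\sim,\simeq)$ on $(Ob(CC),\wt{Ob}(CC))$ satisfying conditions (1)--(4), I would define a candidate pair $(\sim_{Ob},\sim_{Mor})$ by setting $\sim_{Ob}\,=\,\sim$ on objects, and for morphisms $f:Y\sr X$, $f':Y'\sr X'$ declaring $f\sim_{Mor} f'$ if and only if $X\sim_{Ob} X'$, $ft(f)\sim_{Mor} ft(f')$ (recursively, using $l(X)$ as the induction parameter, with the base case $X=X'=pt$ handled by $Y\sim_{Ob} Y'$), and $s_f\simeq s_{f'}$. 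This is forced: Proposition \ref{2014.07.06.prop1} and the canonical decomposition $f=s_f\circ q(ft(f),X)$ show that a morphism of $CC$ is determined by the triple $(X,ft(f),s_f)$, so any regular congruence restricting to $(\sim,\simeq)$ must be given by this formula; this simultaneously gives uniqueness of the inverse and well-definedness of the map in the statement needs no separate argument beyond Lemma \ref{2014.07.08.l1} and Lemma \ref{2014.07.06.l2} which are already available.

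The substantive work is to verify that the $(\sim_{Ob},\sim_{Mor})$ so defined is actually a regular congruence relation, i.e.\ satisfies conditions (1)--(4) of Definition \ref{2014.07.04.def1}. First I would check $\sim_{Mor}$ is an equivalence relation (reflexivity and symmetry are immediate; transitivity follows by induction on $l(X)$ from transitivity of $\sim$ and $\simeq$). Then condition (2) of \ref{2014.07.04.def1} is inherited from condition (2) on $(\sim,\simeq)$ together with $l(Y)=l(\partial_0 s_f)=l(X)-1$ being determined by $l(X)$. Condition (3) is literally condition (3). Condition (4) of \ref{2014.07.04.def1} — existence of a lift $f':X'\sr Y'$ with $f'\sim_{Mor} f$ given $X'\sim X$, $Y'\sim Y$ — I would prove by induction on $l(Y)$: lift $ft(f):X\sr ft(Y)$ to some $h:X'\sr ft(Y')$ by the inductive hypothesis (using condition (2) to know $ft(Y')\sim ft(Y)$, which follows from compatibility of $\sim$ with $ft$), lift the section $s_f\in\wt{Ob}$ along $Y\sim Y'$ to some $s'\in\wt{Ob}$ over $Y'$ with $s'\simeq s_f$ using condition (4) on $(\sim,\simeq)$, and then set $f'=s'\circ q(h,Y')$; one checks $ft(f')=h\circ p_{Y'}$ lies in the correct class and $s_{f'}=s'$ by the axioms of Definition \ref{2014.07.06.def1}.

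The main obstacle is condition (1) of \ref{2014.07.04.def1}: compatibility of $(\sim_{Ob},\sim_{Mor})$ with \emph{all} the C-system operations $\partial_0,\partial_1,id,ft,(X\mapsto p_X),((f,g)\mapsto fg),((X,f)\mapsto f^*(X)),((X,f)\mapsto q(f,X)),(f\mapsto s_f)$, starting only from compatibility of $(\sim,\simeq)$ with the seven ``B-system'' operations $ft,\partial,T,\wt{T},S,\wt{S},\delta$. Compatibility with $\partial_0,\partial_1,ft,s_f$ is essentially built into the definition of $\sim_{Mor}$. For the remaining operations the strategy is to express them through the operations $T,\wt{T},S,\wt{S},\delta$ and the decomposition $f=s_f\circ q(ft(f),X)$, exactly as was done in the proof of Proposition \ref{2009.10.15.prop2} (Lemmas \ref{2009.10.16.l1}--\ref{2009.10.16.l6}): for instance $s_{q(f,X)}=f^*(\delta(X),2)$ expresses compatibility of $q(-,X)$ with the congruence in terms of $\wt{S},\wt{T}$ and $\delta$, while $s_{gf}=g^*(s_f,1)$ handles composition, and $f^*(X)=ft(\partial(s_{q(f,X)}))$ handles the object-pullback operation, and $p_X$ is $q(p_X,\ldots)$-free via $s_{p_{X,i-1}}=(p_{X,i-1},1)^*(\delta(ft^{i-1}X))$. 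Each such identity reduces a compatibility claim for a $CC$-operation to finitely many applications of the compatibilities assumed for $T,\wt{T},S,\wt{S},\delta$, so the proof is a bookkeeping exercise organized by induction on length — tedious but following the same template already established in the subsystems case. Finally I would note that the two round-trips are identities: starting from a regular congruence, restricting and then re-expanding recovers it by the determinedness of morphisms via $(X,ft(f),s_f)$; starting from $(\sim,\simeq)$, expanding and restricting gives back $\sim$ on objects tautologically and $\simeq$ on $\wt{Ob}$ because for $s\in\wt{Ob}$ one has $ft(s)=Id$ and $s_s=s$, so the recursive definition collapses to $s\simeq s'$ directly.
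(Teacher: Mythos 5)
Your proposal is correct and follows essentially the same route as the paper: the same recursive definition of $\sim_{Mor}$ via $ft(f)$ and $s_f$, injectivity from the canonical decomposition $f=s_f\circ q(ft(f),\partial_1(f))$, surjectivity by expressing the C-system operations through $T,\wt{T},S,\wt{S},\delta$ using the identities from the subsystem section, and the round-trip check via $s_s=s$ and $ft(s)=Id$. You actually spell out more of the ``verified similarly'' bookkeeping than the paper does, but the underlying argument is the same.
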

\begin{proof}
Let us show first that the pair defined by a regular congruence relation satisfies the conditions (1)-(4). The compatibilities with operations follow from our definitions of these operations in terms of the C-system structure and the assertion of Lemma \ref{2014.07.08.l1} that the projection to the quotient by a regular congruence relation is a homomorphism of C-systems. 

Conditions (2) and (3) follow directly from the definition of a regular congruence relation. Condition (4) follows easily from condition (4) of Definition \ref{2014.07.04.def1} and Lemma \ref{2014.07.06.l2}. 

Let now $(\sim_{Ob},\sim_1)$ and $(\sim_{Ob},\sim_2)$ be two regular congruence relations such that the restrictions of $\sim_1$ and $\sim_2$ to $\wt{Ob}(CC)$ coincide. Let us show that $f\sim_1 f'$ implies that $f\sim_2 f'$. Let $f\sim_1 f'$.   By induction we may assume that $ft(f)\sim_2 ft(f')$. Then $q(ft(f),\partial_1(f))\sim_2 q(ft(f'),\partial_1(f'))$ and $s_f\sim_2 s_{f'}$. Therefore
$$f=s_f\circ q(ft(f),\partial_1(f))\sim_2 s_{f'}\circ q(ft(f'),\partial_1(f'))=f'$$
This proves injectivity.

To prove surjectivity let $(\sim,\simeq)$ be a pair of equivalence relations satisfying conditions (1)-(4). Let us show that it can be extended to a regular congruence relation on $CC$. 

Define $\sim_{Mor}$ on $Mor_{*,m}$ by induction on $m$ as follows. For $m=0$ we say that $(X_1\sr pt)\sim_{Mor} (X_2\sr pt)$ iff $X_1\sim X_2$.

For $(f_1:X_1\sr Y_1)$, $(f_2:X_2\sr Y_2)$ where $l(Y_1)=l(Y_2)=m+1$ we let $f_1\sim_{Mor} f_2$ iff $ft(f_1)\sim_{Mor} ft(f_2)$ and $s_{f_1}\simeq s_{f_2}$. 

Let us show that if $X_1\sim X_2$ and $i\le n=l(X_1)=l(X_2)$ then $p_{X_1,i}\sim_{Mor} p_{X_2,i}$. We show it by decreasing induction $i$. For $i=n$ it immediately follows from our definition. Let $i<n$. By induction we may assume that 
$$ft(p_{X_1,i})=p_{X_1,i+1}\sim_{Mor}p_{X_2,i+1}=ft(p_{X_2,i})$$
On the other hand since $i<l(X)$ one has
$$s_{p_{X,i}}=\wt{T}(X,\wt{T}(ft(X),\dots,\wt{T}(ft^{i-1}(X),\delta(ft^i(X)))\dots ))$$
which implies that $s_{p_{X_1,i}}\simeq s_{p_{X_2,i}}$ and therefore $p_{X_1,i}\sim_{Mor} p_{X_2,i}$.

In particular, if $X_1\sim X_2$ then $Id_{X_1}=p_{X_1,0}\sim_{Mor} p_{X_2,0}=Id_{X_2}$.

This also shows that the restriction of $\sim_{Mor}$ to $\wt{Ob}$ coincides with $\simeq$. Indeed, for $(s:ft(X)\sr X)\in \wt{Ob}$ one has $s_s=s$ and $ft(s)=Id_{ft(X)}$. Therefore
$$(s_1\sim_{Mor} s_2)=(Id_{ft(X_1)}\sim_{Mor} Id_{ft(X_2)})\wedge (s_1\simeq s_2)\Leftrightarrow (s_1\simeq s_2).$$
The rest of the required properties of $\sim_{Mor}$ are verified similarly.
\end{proof}
\begin{remark}\rm
It is straightforward to see that the projection from a C-system on which a regular congruence relation is defined to the C-system that is defined by this congruence relation according to Lemma \ref{2014.07.08.l1} is an epimorphism in the category of C-systems and their homomorphisms. Categorical characterization of such epimorphisms remains at the moment unknown.
\end{remark}

\def\cprime{$'$}

\end{document}